\newtheorem{thm}{Theorem}
\newtheorem{prop}[thm]{Proposition}
\def\({\left(}
\def\){\right)}
\newtheorem{lema}{Lemma}[section]
\newtheorem*{teorema*}{Theorem}
\newtheorem{remark}[lema]{Remark}
\newtheorem{theorem}[lema]{Theorem}
\newtheorem{definition}[lema]{Definition}
\hfill \fbox{}}
\hfill \fbox{}}
\def\beq{\begin{equation}}
\def\eeq{\end{equation}}
\def\epsilon{\varepsilon}
\begin{document}

\title{Fixed point theorems in cone metric spaces via $w-$distance over topological module}

\author{Shallu Sharma}
\address{Department of Mathematics, University of Jammu } 
\email{shallujamwal09@gmail.com}

\author{Iqbal Kour}
\address{Department of Mathematics, University of Jammu} \email{iqbalkour208@gmail.com}
\author{Pooja Saproo}
\address{Department of Mathematics,University Of Jammu, India}
\email{poojasaproo1994@gmail.com}

\keywords{cone metric spaces with w-distance; topological module; fixed point theorems}

\date{}

\begin{abstract}
The concept of cone metric spaces with $w-$distance was introduced by H. Lakzian and F. Arabyani \cite{Lak} in $2009.$ In $2020,$ Branga and Olaru \cite{Branga} put forth the idea of cone metric spaces over topological module. In this paper, we compose these concepts together and introduce cone metric spaces with $w-$distance over topological module and then prove some fixed point theorems.   
\end{abstract}
\subjclass{37C25, 47H10, 46H25, 47L07}

\maketitle{ }

\section{Introduction}
The work of Huang and Zhang \cite{Huang} introduced the idea of cone metric spaces and established the Banach Contraction principle in this context. For more results on cone metric spaces, one can see \cite{Ciric, Dor, Jan, Kad 1, Kad 2, Kad 3}. Additionally, a number of authors have investigated fixed point theorems in cone metric spaces (see, for instance \cite{Abbas 1, Abbas 2, Azam, Ill1, Ill2}). 
Metric Spaces with $w$-distance was introduced by Kada et. al \cite{Kada} in 1996. H. Lakzian and F. Arabyani \cite{Lak} introduced cone metric space with $w$-distance and proved some fixed point theorems. In 2020, cone metric spaces over topological module was introduced by Branga and Olaru \cite{Branga}. In this paper, we extend the concept of $w$-distance to cone metric spaces over topological module and introduce a new notion ``cone metric spaces with $w$-distance over topological module" and prove some fixed point theorems.

\section{Preliminaries}     
\begin{definition}\cite{AMWG}
Let $(G, +)$ be a group and $\leq$ be a partial order relation on $G$. Then $G$ is said to be a {\bf partially ordered group} if translation in $G$ preserves order: \\ $$ x \leq y \Rightarrow c+x+d \leq c+y+d, ~\forall~ x,y,c,d \in G$$
\end{definition}

\begin{definition} \cite{Stein}
Consider a ring $(R, +, .)$ with identity $1$, where $1 \neq 0$ and $\leq $ is a partial order on $R$. Then $R$ is said to be a {\bf partially ordered ring} if: 
\begin{enumerate}
\item $(R, +, .)$ is a partially ordered group;
\item $c \geq 0$ and $d \geq 0$ implies $c.d \geq 0$ for all $c,d \in R$.
\end{enumerate}
The positive cone of $R$ is denoted by $R^{+} =\{ r \in R: r \geq 0\}$.  
\end{definition}

\begin{remark}
$\mathcal{U}(R)$ denotes the set of invertible elements of $R$ and $\mathcal{U}_{+}(R)$ denotes $\mathcal{U}(R) \cap R^{+}$.
\end{remark}

\begin{definition} \cite{Warner}
Consider an abelian group $(G,+)$. $G$ is said to be a {\bf topological group} if $G$ is endowed with a topology $\mathcal{G}$ and satisfies the following conditions:
\begin{enumerate}
\item For $(g_{1}, g_{2}) \in G\times G$, the map $(g_{1}, g_{2}) \rightarrow  g_{1}+g_{2}$ is continuous, where $g_{1}+g_{2}$ is an element of $G$ and $G \times G$ is endowed with the product topology;
\item For $g \in G$, the map $g \rightarrow -g$ is continuous, where $-g$ is an element of $G$.
\end{enumerate}
 We shall denote topological group as $(G,+, \mathcal{G})$ or $(G, \mathcal{G})$.
\end{definition}

\begin{definition} \cite{Warner}
A ring $(R, +, .)$ is said to be a {\bf topological ring} if $R$ considered with the topology $\mathcal{R}$ such that $(R, +, \mathcal{R})$ is a topological group and if for $(r_{1},r_{2}) \in R \times R$, the map $(r_{1}, r_{2}) \rightarrow r_{1}.r_{2}$ is continuous, where $r_{1}.r_{2}$ is an element of $R$ and $R \times R$ is endowed with the product topology.
\end{definition}
 
$(R, +, ., \mathcal{R})$ is said to be a {\bf Hausdorff topological ring} \cite{Warner} if the topology $\mathcal{R}$ is Hausdorff. We shall denote the topological ring as $(R, +,., \mathcal{R})$ or $(R, \mathcal{R})$.

\begin{definition} \cite{Warner}
Consider a topological ring $(R, \mathcal{R})$. A left $R-$module $(E,+,.)$ is said to be a {\bf topological $R-$module} if we define a topology $\mathcal E$ on $E$ in such a way that $(E,+)$ is a topological abelian group and satisfies the following condition:\\
For $(r,x)\in R\times E,$ the map $(r,x) \rightarrow r.x$ is continuous, where $r.x$ is an element of $E.$ We shall denote a topological left $R-$module as $(E,+,.,\mathcal E)$ or $(E,\mathcal E).$  
\end{definition}

For subsequent definitions and results in this section, we refer \cite{Branga}.

\begin{definition} 
Consider a topological module $(E,+,.,\mathcal E).$ A set $P\subset E$ is said to be a {\bf cone} if :
\begin{itemize}
\item [(1)] $P\neq \phi,\bar{P}=P,P\neq \{0_{E}\};$
\item [(2)] $c.x+d.y\in P,$ whenever $x,y\in P$ and $c,d\in R^{+};$
\item [(3)] $x\in P$ and $-x\in P$ implies $x=0_{E}.$
\end{itemize}
\end{definition}

\begin{remark}
Throughout this paper $P^{\circ}$ shall denote interior of $P$ and $\bar{P}$ shall denote closure of $P$. The cone $P$ is said to be {\bf solid} if $P^{\circ}\neq \phi.$ Let $P$ be a cone contained in $E.$ A {\bf partial ordering} $\leq_{P}$ with respect to $P$ is defined by $y-x\in P$ if and only if $x\leq_{P}y.$ Further, $x<_{P}y$ denotes that $x\leq_{P} y,x\neq y.$ Also, the partial ordering $x\ll y$ indicates that $y-x\in P^{\circ}.$\\ 
\end{remark}  

Consider the following hypotheses:
\begin{itemize} 
\item [(Hyp I)] Let $(R,+,\odot,\mathcal R)$ be a Hausdorff topological ring so that:
\item [(1)] $\mathcal{U}_{+}(R)$ is non-empty.
\item [(2)] $0_{R}$ is a limit point of $\mathcal{U_{+}}(R).$
\item [(3)] $\leq_{R}$ denotes the partial ordering on $R.$
\item [(Hyp II)] $(E,+,.,\mathcal E)$ is considered as a topological left $R-$module.
\item [(Hyp III)] $P$ is a solid cone in $E.$
\end{itemize}

\begin{prop}\label{proposition:1} 
Let $(E,+,.,\mathcal E)$ be a topological left $R-$module and $P$ be a cone in $E$ such that the hypotheses $Hyp I,Hyp II$ and $Hyp III$ are satisfied. Then 
\begin{itemize}
\item [(1)] $P^{\circ}+P^{\circ}\subseteq P^{\circ}.$
\item [(2)] $\alpha \odot P^{\circ}\subseteq P^{\circ},$ where $\alpha$ is a element of $\mathcal U_{+}(R).$
\item [(3)] If $u\leq_{P}v$ and $\beta \in R^{+},$ then $\beta \odot u\leq_{P}\beta \odot v.$
\item [(4)] If $x\leq_{P}y$ and $y\ll z,$ then $x\ll z.$
\item [(5)] If $x\ll y$ and $y\leq_{P}z,$ then $x\ll z.$
\item [(6)] If $x\ll y$ and $y\ll w,$ then $x\ll w.$
\item [(7)] If $0_{E}\leq_{P} x\ll u,~\forall~u\in P^{\circ},$ then $x=0.$
\item [(8)] If $0_{E}\ll u$ and $\{b_{n}\}$ is a sequence in $E$ so that $b_{n}\to 0_{E},$ then there is  a natural number $m_{0}$ so that $b_{n}\ll u$ for $n \geq m_{0}.$
\end{itemize}
\end{prop}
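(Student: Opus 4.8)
The plan is to separate the elementary closure facts that follow directly from the cone axioms from the genuinely order-theoretic statements, and to handle the topological input (continuity of the group and module operations, closedness of $P$, and the limit-point hypothesis on $\mathcal{U}_{+}(R)$) only where it is really needed. First I would record two closure properties coming straight from cone axiom (2): that $P+P\subseteq P$ (take $c=d=1_{R}$, with $1_{R}\in R^{+}$) and that $\beta.x\in P$ whenever $\beta\in R^{+}$ and $x\in P$ (take the second summand to be $0_{E}$). The second of these already gives part (3): from $u\leq_{P}v$ we have $v-u\in P$, hence $\beta.(v-u)=\beta.v-\beta.u\in P$, i.e. $\beta.u\leq_{P}\beta.v$.

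Next I would prove the workhorse inclusion $P^{\circ}+P\subseteq P^{\circ}$, from which (1), (4), (5), (6) all follow. For fixed $b\in P$ the translation $y\mapsto y+b$ is a homeomorphism of $E$ (topological-group axiom), so $P^{\circ}+b$ is open; since $P^{\circ}+b\subseteq P+P\subseteq P$, openness forces $P^{\circ}+b\subseteq P^{\circ}$, and taking the union over $b\in P$ gives the claim. Restricting $b$ to $P^{\circ}$ yields part (1). Parts (4)–(6) then reduce to writing the relevant difference as a telescoping sum: for (4), $z-x=(z-y)+(y-x)\in P^{\circ}+P\subseteq P^{\circ}$; (5) uses $P+P^{\circ}\subseteq P^{\circ}$ and (6) uses $P^{\circ}+P^{\circ}\subseteq P^{\circ}$, all the same inclusion up to commutativity. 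For part (2) I would invoke invertibility of $\alpha\in\mathcal{U}_{+}(R)$: the action $x\mapsto\alpha.x$ is continuous with continuous inverse $x\mapsto\alpha^{-1}.x$ (module-continuity axiom), hence a homeomorphism, so $\alpha.P^{\circ}$ is open; as $\alpha\in R^{+}$ we have $\alpha.P^{\circ}\subseteq\alpha.P\subseteq P$, and openness then gives $\alpha.P^{\circ}\subseteq P^{\circ}$.

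The crux is part (7). Assume $0_{E}\leq_{P}x\ll u$ for every $u\in P^{\circ}$, so that $x\in P$ and $u-x\in P^{\circ}$ for all $u\in P^{\circ}$. Fix some $c\in P^{\circ}$ (possible since $P$ is solid). Because $0_{R}$ is a limit point of $\mathcal{U}_{+}(R)$, I can select a net $(\alpha_{\lambda})$ in $\mathcal{U}_{+}(R)$ with $\alpha_{\lambda}\to 0_{R}$. By part (2) each $\alpha_{\lambda}.c\in P^{\circ}$, so applying the hypothesis with $u=\alpha_{\lambda}.c$ gives $\alpha_{\lambda}.c-x\in P^{\circ}\subseteq P$. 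Continuity of the scalar action yields $\alpha_{\lambda}.c\to 0_{R}.c=0_{E}$, whence $\alpha_{\lambda}.c-x\to -x$; since $\bar{P}=P$ the limit $-x$ lies in $P$, and then $x\in P$ together with $-x\in P$ forces $x=0_{E}$ by cone axiom (3). I expect this to be the main obstacle: the limit-point hypothesis supplies only a net rather than a sequence, so the closedness of $P$ must be used as closure under net limits, and one must be careful to first verify $\alpha_{\lambda}.c\in P^{\circ}$ via part (2) before the hypothesis can be invoked.

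Finally, part (8) is a direct neighbourhood argument. Here $0_{E}\ll u$ means $u\in P^{\circ}$, and the map $y\mapsto u-y$ is continuous, so $u-b_{n}\to u-0_{E}=u$. Since $P^{\circ}$ is open and contains $u$, it is a neighbourhood of the limit, so the sequence $u-b_{n}$ eventually enters $P^{\circ}$; that is, there is $m_{0}$ with $u-b_{n}\in P^{\circ}$, equivalently $b_{n}\ll u$, for all $n\ge m_{0}$.
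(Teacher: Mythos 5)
The paper never proves this proposition: it is imported from the cited source (the text says ``For subsequent definitions and results in this section, we refer \cite{Branga}''), so there is no internal argument to compare yours against. Your proof is correct and is essentially the standard one for this setting, in line with the route taken in \cite{Branga}: the key observation that any open subset of $P$ lies in $P^{\circ}$, applied to the translates $P^{\circ}+b$ (which gives (1) and (4)--(6) all at once via the single inclusion $P^{\circ}+P\subseteq P^{\circ}$) and to the homeomorphic image $\alpha\odot P^{\circ}$ (which gives (2)); the net argument $\alpha_{\lambda}\to 0_{R}$ with $\alpha_{\lambda}\in\mathcal{U}_{+}(R)$, combined with closedness of $P$ under net limits and cone axiom (3), for part (7); and the neighbourhood argument for part (8). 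Your packaging of (4)--(6) through the one workhorse inclusion is slightly tidier than handling the three cases separately, and your remark that the limit-point hypothesis on $\mathcal{U}_{+}(R)$ only yields a net (so that closedness of $P$ must be used for nets, not just sequences) is exactly the point at which a careless version of (7) would fail.

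One caveat is worth making explicit. Your first step, $P+P\subseteq P$, is obtained from cone axiom (2) with $c=d=1_{R}$, and therefore needs $1_{R}\in R^{+}$ --- you state this assumption openly, but it is not literally among the paper's hypotheses: the definition of a partially ordered ring given here does not force $1_{R}\geq_{R} 0_{R}$, and Hyp I only provides that $\mathcal{U}_{+}(R)$ is non-empty and has $0_{R}$ as a limit point. Since additive closure of $P$ is indispensable for parts (1) and (4)--(6) (though not for (2), (3), (7), (8), as your proof correctly shows), this positivity of the identity has to be read into the framework; flagging it as you did is the right call, but strictly speaking it is an assumption beyond what the paper records.
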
 
 
From now onwards, $(R,+,.,\mathcal{R})$ denotes a Hausdorff topological ring.  

\begin{definition} 
A directed set is defined to be a partially ordered set $(\Lambda,\leq)$ such that the following condition is satisfied:\\
For $\lambda_{1}, \lambda_{2}$ in $\Lambda$ there exists $\lambda_{3}$ in $\Lambda$ so that $\lambda_{1}\leq \lambda_{3}$ and $\lambda_{2}\leq \lambda_{3}.$
\end{definition}

\begin{definition}
A sequence $\{x_{\lambda}\}_{\lambda\in \Lambda}$ contained in $R$ is a family of elements indexed by a directed set.
\end{definition}

\begin{definition}
A family $\{x_{\lambda}\}_{\lambda \in \Lambda}$ contained in $R$ is said to be convergent to an element $x\in R$ if for each neighborhood $U$ of $x$ there exists $\lambda_{0}\in \Lambda$ such that $x_{\lambda}$ is an element of $U$ for every $\lambda \in \Lambda,$ where $\lambda \geq \lambda_{0}.$ 
\end{definition}

\begin{definition}
A sequence $\{x_{\lambda}\}_{\lambda\in \Lambda}$ is said to be a Cauchy sequence if for each neighborhood $U$ of $0_{R}$ there exists $\lambda_{0}\in \Lambda$ so that $x_{\lambda_{1}}-x_{\lambda_{2}}\in U$ for each $\lambda_{0}\leq \lambda_{1}$ and $\lambda_{0}\leq \lambda_{2}.$
\end{definition}

\begin{theorem}
Every convergent sequence $\{x_{\lambda}\}_{\lambda\in \Lambda}$ contained in $R$ is a Cauchy sequence.
\end{theorem}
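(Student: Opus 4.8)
The plan is to exploit the continuity of the subtraction map in the topological group $(R,+,\mathcal R)$ to convert the convergence hypothesis directly into the Cauchy condition. Suppose $\{x_\lambda\}_{\lambda\in\Lambda}$ converges to some $x\in R$. Fix an arbitrary neighborhood $U$ of $0_R$; the goal is to produce an index $\lambda_0\in\Lambda$ beyond which every difference $x_{\lambda_1}-x_{\lambda_2}$ lands in $U$.

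The first step is to construct a single neighborhood $V$ of $x$ satisfying $V-V\subseteq U$. Since $(R,+,\mathcal R)$ is a topological group, both addition and negation are continuous, so the map $s\colon R\times R\to R$ given by $s(a,b)=a-b$ is continuous when $R\times R$ carries the product topology. Because $s(x,x)=x-x=0_R\in U$, continuity of $s$ at the point $(x,x)$ yields neighborhoods $V_1,V_2$ of $x$ with $s(V_1\times V_2)=V_1-V_2\subseteq U$. Setting $V=V_1\cap V_2$ produces a single neighborhood of $x$ with $V-V\subseteq U$.

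The second step is to invoke convergence against this $V$: by the definition of convergence there exists $\lambda_0\in\Lambda$ such that $x_\lambda\in V$ for every $\lambda\geq\lambda_0$. Then for any $\lambda_1,\lambda_2\geq\lambda_0$ one has $x_{\lambda_1},x_{\lambda_2}\in V$, whence $x_{\lambda_1}-x_{\lambda_2}\in V-V\subseteq U$. Since $U$ was an arbitrary neighborhood of $0_R$, this is precisely the Cauchy condition stated in the definition, and the proof is complete.

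The only genuinely delicate point is the passage from the joint continuity of $s$ at $(x,x)$ to a single symmetric neighborhood $V$ with $V-V\subseteq U$; everything else is a mechanical unwinding of the definitions. I anticipate no serious obstacle, as the directed-set indexing plays no special role beyond letting the convergence hypothesis supply the single threshold $\lambda_0$, and Hausdorffness of $R$ is not even needed for this implication.
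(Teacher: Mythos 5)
Your proof is correct. Note that the paper itself does not prove this statement at all: it appears in the preliminaries as a result imported from the cited reference of Branga and Olaru, so there is no in-paper argument to compare against. Your argument is the standard one and it does exactly what is needed: continuity of the subtraction map $s(a,b)=a-b$ at $(x,x)$ gives a product neighborhood $V_1\times V_2$ of $(x,x)$ with $V_1-V_2\subseteq U$, the intersection $V=V_1\cap V_2$ yields $V-V\subseteq U$, and the single threshold $\lambda_0$ from convergence then forces $x_{\lambda_1}-x_{\lambda_2}\in U$ for all $\lambda_1,\lambda_2\geq\lambda_0$, which is verbatim the paper's definition of a Cauchy sequence indexed by a directed set. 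Your closing observations are also accurate: Hausdorffness plays no role here, and the directed-set indexing enters only through the existence of the single index $\lambda_0$.
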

To define the summability of the family of elements of a topological ring, let $\mathcal H(\Lambda)$ be the set of all finite sets contained in $\Lambda$ directed by the inclusion $\subseteq.$

\begin{definition}
An element $t$ of $R$ is sum of family $\{x_{\lambda}\}_{\lambda\in \Lambda}$ contained in $R$ if the sequence $\{t_{I}\}_{I\in \mathcal{H}(\Lambda)}$ is convergent to $t,$ where for each $I\in \mathcal{H}(\Lambda),$ $$t_{I}=\sum_{\lambda\in I}x_{\lambda}.$$
\end{definition}

The family $\{t_{\lambda}\}_{\lambda\in I}$ is said to be summable if $\{t_{\lambda}\}_{\lambda\in I}$ has a sum $t$ in $R.$

\begin{definition}
A family $\{x_{\lambda}\}_{\lambda\in I}$ contained in $R$ is said to satisfy Cauchy condition if for each neighborhood $U$ of $0_{R}$ there exists $I_{U}$ in $\mathcal{H}(\Lambda)$ so that {\bf $\sum_{\lambda\in J}x_{\lambda}\in U,$} for each $J\in \mathcal{H}(\Lambda)$ disjoint with $I_{U}.$
\end{definition}

\begin{theorem}
A family $\{x_{\lambda}\}_{\lambda\in \Lambda}$ contained in $R$ satisfies Cauchy condition if and only if $\{t_{I}\}_{I \in \mathcal{H}(\Lambda)}$ is a Cauchy sequence.
\end{theorem}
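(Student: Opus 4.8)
The plan is to prove the two implications directly, using only that $(R,+,\mathcal R)$ is a topological abelian group, so that subtraction is continuous. The single algebraic fact I would isolate first and use in both directions is the additivity of partial sums over disjoint unions: if $I,J\in\mathcal H(\Lambda)$ are disjoint, then $t_{I\cup J}=t_I+t_J$, and consequently $t_{I\cup J}-t_I=\sum_{\lambda\in J}x_\lambda$.

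For the forward implication (Cauchy condition $\Rightarrow$ $\{t_I\}$ Cauchy), fix a neighborhood $U$ of $0_R$. First I would invoke continuity of the map $(a,b)\mapsto a-b$ at $(0_R,0_R)$ to choose a neighborhood $V$ of $0_R$ with $V-V\subseteq U$. Applying the Cauchy condition to $V$ yields $I_V\in\mathcal H(\Lambda)$ with $\sum_{\lambda\in J}x_\lambda\in V$ for every $J\in\mathcal H(\Lambda)$ disjoint from $I_V$. Then for any $I_1,I_2\in\mathcal H(\Lambda)$ with $I_V\subseteq I_1$ and $I_V\subseteq I_2$, I would write $t_{I_k}=t_{I_V}+\sum_{\lambda\in I_k\setminus I_V}x_\lambda$ for $k=1,2$; since $I_1\setminus I_V$ and $I_2\setminus I_V$ are disjoint from $I_V$, both remainder sums lie in $V$, and $t_{I_1}-t_{I_2}$ is their difference, hence lies in $V-V\subseteq U$. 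Thus $I_V$ witnesses that $\{t_I\}_{I\in\mathcal H(\Lambda)}$ is a Cauchy sequence.

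For the reverse implication, fix a neighborhood $U$ of $0_R$ and use that $\{t_I\}$ is Cauchy to obtain $I_0\in\mathcal H(\Lambda)$ with $t_{I_1}-t_{I_2}\in U$ whenever $I_0\subseteq I_1$ and $I_0\subseteq I_2$. Set $I_U=I_0$. Given any $J\in\mathcal H(\Lambda)$ disjoint from $I_U$, I would apply the Cauchy estimate to $I_1=I_0\cup J$ and $I_2=I_0$, both of which contain $I_0$; by the disjoint-union identity $t_{I_0\cup J}-t_{I_0}=\sum_{\lambda\in J}x_\lambda$, so this sum lies in $U$, establishing the Cauchy condition.

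I do not expect a serious obstacle here: apart from the disjoint-union bookkeeping on finite index sets, the only genuinely topological step is the passage from $U$ to the neighborhood $V$ with $V-V\subseteq U$ in the forward direction, which is exactly where the topological-group structure of $(R,+,\mathcal R)$ is needed. I would make sure that this neighborhood splitting is justified from continuity of subtraction before assembling the rest of the argument, which is otherwise purely set-theoretic.
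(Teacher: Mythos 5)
Your proof is correct: the disjoint-union identity $t_{I\cup J}=t_I+t_J$ for disjoint $I,J\in\mathcal H(\Lambda)$, together with a neighborhood $V$ of $0_R$ satisfying $V-V\subseteq U$ (which exists by continuity of $(a,b)\mapsto a-b$ in the topological group $(R,+,\mathcal R)$), yields both implications exactly as you assemble them, and the reverse direction needs no neighborhood splitting at all, just the choice $I_1=I_0\cup J$, $I_2=I_0$. The paper itself states this theorem without proof, as a preliminary cited from \cite{Branga}; your argument is the standard one for this equivalence, so it matches the intended proof rather than diverging from it.
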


\begin{theorem}\label{theorem:1}
Let $\{x_{\lambda}\}_{\lambda\in \Lambda}$ be a summable family in $R.$ Then for each neighborhood $U$ of $0_{R},$ there exists $J$ in $\mathcal{H}(\Lambda)$ so that $x_{\lambda}\in U$ for each $\lambda\in \Lambda\setminus J.$  
\end{theorem}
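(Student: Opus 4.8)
The plan is to reduce everything to the Cauchy condition and then to extract individual terms by testing that condition against singletons. First I would observe that, since $\{x_{\lambda}\}_{\lambda\in\Lambda}$ is summable, its sequence of finite partial sums $\{t_{I}\}_{I\in\mathcal{H}(\Lambda)}$ converges (to the sum $t$) by the very definition of summability. Applying the theorem that asserts every convergent sequence in $R$ is a Cauchy sequence, I conclude that $\{t_{I}\}_{I\in\mathcal{H}(\Lambda)}$ is Cauchy. Then, invoking the characterization that a family satisfies the Cauchy condition if and only if $\{t_{I}\}_{I\in\mathcal{H}(\Lambda)}$ is Cauchy, I obtain that $\{x_{\lambda}\}_{\lambda\in\Lambda}$ satisfies the Cauchy condition.

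Next, I would fix an arbitrary neighborhood $U$ of $0_{R}$. By the Cauchy condition just established, there exists $I_{U}\in\mathcal{H}(\Lambda)$ such that $\sum_{\lambda\in J}x_{\lambda}\in U$ for every $J\in\mathcal{H}(\Lambda)$ disjoint from $I_{U}$. I would take $J:=I_{U}$ as the finite set required by the statement.

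The decisive step is to specialize the Cauchy condition to one-element sets. For any index $\mu\in\Lambda\setminus I_{U}$, the singleton $\{\mu\}$ is a finite subset of $\Lambda$ disjoint from $I_{U}$, so the Cauchy condition applied to $J=\{\mu\}$ gives $\sum_{\lambda\in\{\mu\}}x_{\lambda}=x_{\mu}\in U$. Since $\mu$ was arbitrary in $\Lambda\setminus I_{U}$, this shows $x_{\lambda}\in U$ for every $\lambda\in\Lambda\setminus I_{U}$, which is exactly the claim with $J=I_{U}$.

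As for where the difficulty lies, there is no real obstacle once the Cauchy condition is in hand; the only conceptual point is recognizing that the passage from ``all sums over disjoint finite sets lie in $U$'' to ``all individual terms lie in $U$'' is achieved precisely by restricting attention to singletons. The remaining checks—that $\{t_{I}\}_{I\in\mathcal{H}(\Lambda)}$ converges because the family is summable, and that this convergence forces the Cauchy condition—are furnished directly by the two preceding theorems, so the entire argument collapses to this singleton specialization.
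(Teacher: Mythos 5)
Your proof is correct. The paper states this theorem without proof---it is one of the preliminary results quoted from the references (Branga--Olaru, ultimately Warner's \emph{Topological Rings})---and your route (summability gives a convergent, hence Cauchy, net of partial sums $\{t_I\}_{I\in\mathcal{H}(\Lambda)}$; the stated equivalence converts this into the Cauchy condition for the family; specializing that condition to singletons $J=\{\mu\}$ with $\mu\notin I_U$ extracts the individual terms) is precisely the standard argument one would give here.
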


\begin{definition}
Let $(R,+,.,\mathcal R)$ be a topological ring. Then $R$ is said to be complete if the topological additive group of ring $(R,+,\mathcal R)$ is complete. 
\end{definition}

\begin{definition} 
Let $X\neq \phi$ be a set, $(E,+,.,\mathcal E)$ be a topological left $R-$module and the map $d:X\times X\to E$ satisfies:
\begin{itemize}
\item [(1)] $d(x,y)\geq_{P} 0_{E}~\forall~x,y\in X.$
\item [(2)] $d(x,y)=0_{E}$ if and only if $x=y~\forall~x,y\in X.$
\item [(3)]$d(x,y)=d(y,x)~\forall~x,y\in X.$
\item [(4)] $d(x,y)\leq_{P} d(x,z)+d(z,y)~\forall~x,y\in X.$
\end{itemize}
Then $d$ is said to be a cone metric on $X$ and the pair $(X,d)$ is said to be a cone metric space over topological left $R-$module $E.$ 
\end{definition}

\begin{definition}
Let $(X,d)$ be a cone metric space over the topological left $R-$module E, $x$ be an element of $X$ and $\{x_{n}\}$ be a sequence in $X.$ Then
\begin{itemize}
\item [(1)] $\{x_{n}\}$ is said to be convergent to $x$ if for each $u\gg 0,$ there is a natural number $N$ such that $d(x_{n},x)\ll u,~\forall~n>N.$
\item [(2)] $\{x_{n}\}$ contained in $X$ is said to be a Cauchy sequence if for each $u\gg 0,$ there is a natural natural number $N$ so that $d(x_{n},x_{m})\ll u,~\forall~m,n>N.$   
\end{itemize}
\end{definition}

If every Cauchy sequence in the cone metric space $(X,d)$ is convergent then $(X,d)$ is said to be complete.

\section{Cone metric spaces over topological module}

In this section, some new results in cone metric spaces over topological module are put forth.

\begin{prop}
Let $(E,+,.,\mathcal E)$ be a topological left $R$-module and $P$ be a cone in $E$ so that the hypotheses $HypI,HypII$ and $HypIII$ are satisfied. If
\begin{itemize}
\item [(i)] $x\leq_{P}y,y\leq_{P}z,$ then $x\leq_{P}z.$
\item [(ii)]$x\ll y$ and $\lambda\in R^{+},$ then $\lambda \odot x\ll \lambda \odot y.$ 
\item [(iii)] $x\ll y$ and $\lambda\in R^{+},$ then $\lambda \odot x\leq_{P} \lambda \odot y.$  
\end{itemize}
\end{prop}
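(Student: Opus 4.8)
The plan is to reduce each of the three assertions to a fact already contained in Proposition~\ref{proposition:1}, together with two elementary observations: that $P$ is closed under addition and that $P^{\circ}\subseteq\overline{P}=P$.

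For (i), I would write $z-x=(z-y)+(y-x)$. The hypotheses $x\leq_{P}y$ and $y\leq_{P}z$ mean precisely $y-x\in P$ and $z-y\in P$. Taking $c=d=1_{R}\in R^{+}$ in cone axiom~(2), and using the module identity $1_{R}\odot e=e$, shows $P+P\subseteq P$; hence $z-x\in P$, i.e. $x\leq_{P}z$. This is the routine part.

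For (iii), I would first note that $x\ll y$ gives $y-x\in P^{\circ}\subseteq P$, so $x\leq_{P}y$. Then Proposition~\ref{proposition:1}(3), applied with $u=x$, $v=y$ and $\beta=\lambda\in R^{+}$, gives $\lambda\odot x\leq_{P}\lambda\odot y$ at once (using $\lambda\odot y-\lambda\odot x=\lambda\odot(y-x)$ from the module axioms). This works for every $\lambda\in R^{+}$; the degenerate case $\lambda=0_{R}$ merely yields $0_{E}\leq_{P}0_{E}$.

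For (ii), the natural route is Proposition~\ref{proposition:1}(2). Since $x\ll y$ means $y-x\in P^{\circ}$ and $\lambda\odot y-\lambda\odot x=\lambda\odot(y-x)$, the claim amounts to $\lambda\odot(y-x)\in P^{\circ}$, which is exactly the inclusion $\lambda\odot P^{\circ}\subseteq P^{\circ}$ furnished by Proposition~\ref{proposition:1}(2). I expect this to be the main obstacle, because Proposition~\ref{proposition:1}(2) is available only for invertible positive scalars, $\lambda\in\mathcal{U}_{+}(R)$, and the conclusion genuinely fails for a non-invertible $\lambda$: the sharpest obstruction is $\lambda=0_{R}$, which forces $\lambda\odot y-\lambda\odot x=0_{E}$, whereas $0_{E}\notin P^{\circ}$ for a proper solid cone. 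I would therefore prove (ii) under the hypothesis $\lambda\in\mathcal{U}_{+}(R)$ rather than for an arbitrary $\lambda\in R^{+}$, matching the hypothesis under which Proposition~\ref{proposition:1}(2) is stated.
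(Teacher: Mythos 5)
Your part (i) coincides with the paper's proof: the identical decomposition $z-x=(z-y)+(y-x)$ and the same inclusion $P+P\subseteq P$. For part (iii), however, you take a genuinely different route. The paper disposes of (iii) with the single line ``follows from (ii)''; you instead prove it directly, noting $P^{\circ}\subseteq\overline{P}=P$ so that $x\ll y$ already gives $x\leq_{P}y$, and then applying Proposition \ref{proposition:1}(3) with $\beta=\lambda$. Your route is better: it is valid for every $\lambda\in R^{+}$ and is completely independent of (ii), whereas the paper's derivation of (iii) stands or falls with its proof of (ii).

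On part (ii) you have located a genuine defect --- but the gap is in the paper, not in your proposal. The paper's entire proof of (ii) is the bare assertion that $\lambda\in R^{+}$ and $y-x\in P^{\circ}$ imply $\lambda\odot(y-x)\in P^{\circ}$, with no justification; the only tool available, Proposition \ref{proposition:1}(2), gives $\lambda\odot P^{\circ}\subseteq P^{\circ}$ only for $\lambda\in\mathcal{U}_{+}(R)$, exactly as you say. Your counterexample $\lambda=0_{R}$ is correct: the conclusion would read $0_{E}\ll 0_{E}$, i.e.\ $0_{E}\in P^{\circ}$, which is impossible under Hyp I--III (if $0_{E}\in P^{\circ}$, then $P^{\circ}\cap(-P^{\circ})$ is an open neighbourhood of $0_{E}$ contained in $P\cap(-P)=\{0_{E}\}$, so $\{0_{E}\}$ is open; continuity of scalar multiplication and the fact that $0_{R}$ is a limit point of $\mathcal{U}_{+}(R)$ then force every $x\in E$ to satisfy $\alpha\odot x=0_{E}$ for some invertible $\alpha$, hence $E=\{0_{E}\}$, contradicting $P\neq\{0_{E}\}$). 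So your decision to prove (ii) only for $\lambda\in\mathcal{U}_{+}(R)$ is the correct repair; as printed, both the statement of (ii) for arbitrary $\lambda\in R^{+}$ and the paper's one-line proof of it are flawed, and the paper's (iii), unlike yours, is proved only to the extent that its (ii) is.
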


\begin{proof}
\begin{itemize}
\item [(i)] If $x\leq_{P} y,$ then $y-x\in P.$ Also, if $y\leq_{P} z,$ then $y-z\in P.$ Now, $z-x=(z-y)+(y-x)\in P+P\subset P.$ Hence $z-x\in P.$ Therefore, $x\leq_{P} z.$
\item [(ii)] If $x\ll y,$ then $y-x\in P^{\circ}.$ Also, for $\lambda\in {R}^{+},\lambda \odot(y-x)\in P^{\circ}.$ So, $\lambda \odot x\ll \lambda \odot y.$
\item [(iii)] Proof follows from $(ii).$
\end{itemize}
\end{proof}

\begin{theorem}\label{theorem:3}
Let $(X,d)$ be a cone metric space over a topological left $R$-module E. Then every convergent sequence is a Cauchy sequence.
\end{theorem}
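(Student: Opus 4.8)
The plan is to mimic the classical metric-space argument: apply the triangle inequality to $d(x_n,x_m)$, split it into two terms each controlled by the convergence $x_n\to x$, and glue the estimates using the order relations collected in Proposition~\ref{proposition:1}. Concretely, fix $u\gg 0$. The triangle inequality (axiom~(4) of the cone metric) together with symmetry (axiom~(3)) gives $d(x_n,x_m)\leq_{P} d(x_n,x)+d(x_m,x)$ for all $m,n$, so it suffices to force the right-hand side below $u$. For this I must first manufacture an interior element $v\gg 0$ that plays the role of ``$u/2$'', i.e. with $v+v\ll u$, and then use convergence to push both $d(x_n,x)$ and $d(x_m,x)$ below $v$.

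The construction of such a $v$ is the crux, since a module carries no literal division by $2$; this is where Hyp~I enters. Because $0_{R}$ is a limit point of $\mathcal{U}_{+}(R)$, I can select a sequence $\{\alpha_{k}\}\subseteq \mathcal{U}_{+}(R)$ with $\alpha_{k}\to 0_{R}$. Continuity of addition in $R$ and of the scalar action $R\times E\to E$ then yields $b_{k}:=(\alpha_{k}+\alpha_{k})\odot u\to 0_{R}\odot u=0_{E}$. Applying Proposition~\ref{proposition:1}(8) to this null sequence together with the given $u$ (note $u\gg 0$ means $0_{E}\ll u$), there is an index $k_{0}$ with $b_{k_{0}}\ll u$. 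Setting $\alpha:=\alpha_{k_{0}}$ and $v:=\alpha\odot u$, Proposition~\ref{proposition:1}(2) gives $v\in P^{\circ}$, hence $v\gg 0$, while the left-module identity $\alpha\odot u+\alpha\odot u=(\alpha+\alpha)\odot u=b_{k_{0}}$ gives precisely $v+v\ll u$.

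With $v$ in hand the remainder is routine. Convergence of $\{x_{n}\}$ to $x$ supplies an $N$ with $d(x_{n},x)\ll v$ for all $n>N$; then for $m,n>N$, Proposition~\ref{proposition:1}(1) shows $d(x_{n},x)+d(x_{m},x)\ll v+v$, and chaining $d(x_{n},x_{m})\leq_{P} d(x_{n},x)+d(x_{m},x)\ll v+v\ll u$ through Proposition~\ref{proposition:1}(4) and~(6) yields $d(x_{n},x_{m})\ll u$. Since $u\gg 0$ was arbitrary, $\{x_{n}\}$ is Cauchy. I expect the only genuine obstacle to be the halving step: everything downstream is bookkeeping with the order axioms, whereas producing $v$ forces real use of the standing hypotheses (the limit-point property of $\mathcal{U}_{+}(R)$, continuity of the action, and part~(8) of Proposition~\ref{proposition:1}). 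A secondary subtlety is the passage from ``limit point'' to an actual null sequence in $\mathcal{U}_{+}(R)$; if sequential selection is not available one should phrase this selection with nets, which is compatible with the net-based convergence used for $R$ earlier in the paper.
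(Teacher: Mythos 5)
Your proposal is correct, and its overall skeleton (triangle inequality, two halves, bookkeeping via Proposition~\ref{proposition:1}) matches the paper's proof. But it differs genuinely, and for the better, at the crux: the paper simply writes $d(x_n,x)\ll u/2$ and $u/2+u/2=u$, which silently presupposes that ``$u/2$'' exists in $E$ --- i.e.\ that $1_R+1_R$ is invertible in $R$ with positive inverse --- something the standing hypotheses do not guarantee (Hyp~I only gives $\mathcal{U}_{+}(R)\neq\emptyset$ and $0_R$ a limit point of it). You instead manufacture $v\gg 0$ with $v+v\ll u$ from Hyp~I(2), continuity of the module operations, and Proposition~\ref{proposition:1}(2),(8); this repairs the gap and makes the argument valid over an arbitrary ring satisfying the hypotheses, including ones where $2$ is not invertible. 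Two small remarks. First, your proof (like any use of Proposition~\ref{proposition:1}) requires Hyp~I--III to be in force, which the theorem statement omits; the paper has the same defect, so this is not a flaw of yours specifically, but it is worth stating explicitly. Second, you can sidestep the sequence-versus-net subtlety you flag at the end: since $P^{\circ}$ is open and the map $r\mapsto u-(r+r)\odot u$ is continuous with value $u\in P^{\circ}$ at $r=0_R$, there is a neighborhood $V$ of $0_R$ on which this map lands in $P^{\circ}$, and the limit-point property of $\mathcal{U}_{+}(R)$ then directly supplies $\alpha\in\mathcal{U}_{+}(R)\cap V$; no null sequence (or net) and no appeal to Proposition~\ref{proposition:1}(8) is needed for this step.
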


\begin{proof}
Suppose that $\{x_{n}\}$ is a sequence in $X$ convergent to $x.$ Then for all $u\gg 0$ there is a natural number $N$ such that $d(x_{n},x)\ll u/2$ and $d(x_{m},x)\ll u/2,~\forall~n,m\geq N.$ Now,
\begin{eqnarray*}
d(x_{n},x_{m})&\leq_{P}&d(x_{n},x)+d(x,x_{m})\\
              &\ll&\frac{u}{2}+\frac{u}{2}\\
							&=&u.
\end{eqnarray*}
Hence the proof.
\end{proof}

\begin{theorem}\label{theorem:2}
Let $(X,d)$ be a cone metric space over a topological left $R$-module E, $P$ be a solid cone in $E$ so that hypothesis $H_{1},H_{2}$ and $H_{3}$ are satisfied. Consider sequences $\{x_{n}\}$ in $X$ and $\{b_{n}\}\in E$ such that $b_{n}\to 0_{E}.$
\begin{itemize}
\item [(i)] If $d(x_{n},x_{m})\leq b_{n}$ for each $n\in \mathbb{N}$ with $m>n,$ then $\{x_{n}\}$ is a Cauchy sequence.
\item [(ii)] If $d(x_{n},x)\leq b_{n}$ for each $n\in \mathbb{N},$ then $\{x_{n}\}$ is convergent to $x.$
\item [(iii)] If $d(x_{n},x)\leq b_{n}$ for each $n\in \mathbb{N},$ then $\{x_{n}\}$ is a Cauchy sequence.
\end{itemize} 
\end{theorem}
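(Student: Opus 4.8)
The plan is to prove each of the three assertions by reducing convergence or the Cauchy property to the auxiliary fact recorded in Proposition~\ref{proposition:1}(8), namely that a null sequence in $E$ eventually lands inside any given interior point's order-neighborhood. The crucial observation is that the hypotheses $H_1, H_2, H_3$ referred to here should be read as the standing hypotheses $Hyp\,I, Hyp\,II, Hyp\,III$, so that all parts of Proposition~\ref{proposition:1} are available; in particular part~(8) is the engine of the whole argument. I would also use the transitivity and mixed-comparison properties $(4)$ and $(5)$ of that proposition, together with the closedness of $P$ used implicitly in comparing order relations.

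First I would dispatch part~(ii), since it is the most direct. Fix an arbitrary $u \gg 0_E$. Because $b_n \to 0_E$, Proposition~\ref{proposition:1}(8) yields a natural number $m_0$ with $b_n \ll u$ for all $n \ge m_0$. The hypothesis gives $d(x_n,x) \le_P b_n$, and combining $d(x_n,x) \le_P b_n \ll u$ through the mixed-comparison property Proposition~\ref{proposition:1}(4) produces $d(x_n,x) \ll u$ for every $n \ge m_0$. Since $u \gg 0_E$ was arbitrary, this is exactly the definition of convergence of $\{x_n\}$ to $x$, so part~(ii) is done.

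Next I would handle part~(i) in the same spirit. Fix $u \gg 0_E$ and again extract $m_0$ from Proposition~\ref{proposition:1}(8) so that $b_n \ll u$ whenever $n \ge m_0$. For indices $m > n \ge m_0$ the hypothesis gives $d(x_n,x_m) \le_P b_n$, and since $b_n \ll u$ the mixed-comparison property~(4) again gives $d(x_n,x_m) \ll u$. By the symmetry axiom of the cone metric this also controls the pair in the reversed order, so $d(x_n,x_m) \ll u$ holds for all $m,n \ge m_0$, which is the Cauchy condition. Part~(iii) then follows immediately by composing parts~(ii) and~(i): the hypothesis of~(iii) is exactly that of~(ii), so $\{x_n\}$ converges to $x$ by~(ii), and every convergent sequence is Cauchy by Theorem~\ref{theorem:3}.

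The only genuine subtlety, and the step I expect to require the most care, is the legitimacy of the inequality $b_n \le_P b_n$ being upgraded to a strict relation via property~(4): one must confirm that Proposition~\ref{proposition:1}(4) indeed applies to arbitrary elements of $E$ rather than only to points of the cone, and that the ``$\le_P$ then $\ll$'' pattern is matched correctly (not the reversed ``$\ll$ then $\le_P$'' pattern of property~(5)). A secondary point worth stating explicitly is the identification of $H_1,H_2,H_3$ with $Hyp\,I,Hyp\,II,Hyp\,III$; without that reading the hypotheses of the theorem would be undefined, so I would note this at the outset. Everything else is a routine unwinding of the definitions of convergence and Cauchy sequences for cone metrics.
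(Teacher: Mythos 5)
Your proof is correct and follows essentially the same route as the paper's: Proposition~\ref{proposition:1}(8) supplies $b_n \ll u$ eventually, the order comparison upgrades $d(x_n,x_m) \leq_P b_n \ll u$ to $d(x_n,x_m) \ll u$, and part~(iii) is deduced from part~(ii) together with Theorem~\ref{theorem:3}. In fact you are slightly more careful than the paper, which leaves the appeal to Proposition~\ref{proposition:1}(4) and the symmetry step in part~(i) implicit.
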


\begin{proof}
  
\begin{itemize}
\item [(i)] Since $b_{n}\to 0_{E}.$ Then by Proposition \ref{proposition:1} $(8),$ we see that for $u\gg 0,$ there exists a natural number $m_{0}$ such that $b_{n}\ll u$ for $n\geq m_{0}.$ Hence $d(x_{n},x_{m})\leq b_{n}$ implies that $d(x_{n},x_{m})\ll u,~\forall~n,m>N.$ This proves that $\{x_{n}\}$ is a Cauchy sequence.
\item [(ii)] Since $b_{n}\to 0_{E}.$ Then by Proposition \ref{proposition:1} $(8),$ we see that for $u\gg 0,$ there exists a natural number $N$ such that $b_{n}\ll u$ for $n\geq N.$ Hence $d(x_{n},x)\leq b_{n}$ implies that $d(x_{n},x)\ll u$ for all $N\in \mathbb{N}$ such that $n>N.$ This proves that $\{x_{n}\}$ is convergent to $x.$
\item [(iii)] Proof follows from Theorem \ref{theorem:3} and $(ii).$   
\end{itemize}

\end{proof}

\begin{remark}\label{remark:1}\cite{Branga}
Let $(X,d)$ be a cone metric space over a topological left $R-$module E and consider a sequence $\{x_{n}\}$ in $X.$ If $\{x_{n}\}$ is convergent to $x$ and $\{x_{n}\}$ is convergent to $y,$ then $x=y.$
\end{remark}

\begin{theorem}
Let $(X,d)$ be a cone metric space over a topological left $R-$module $E,P$ be a solid cone in $E$ such that the hypothesis $H_{1},H_{2}$ and $H_{3}$ are satisfied, $\{x_{n}\}$ and $\{y_{n}\}$ be sequences in $X,\{a_{n}\},\{b_{n}\}$ be two sequences in $E$ convergent to $0_{E}$ and $y,z\in X.$
\begin{itemize}
\item [(i)] If $d(x_{n},y)\leq_{P} a_{n}$ and $d(x_{n},z)\leq_{P}b_{n}$ for any natural number $n,$ then $y=z.$
\item [(ii)] If $d(x_{n},y_{n})\leq_{P} a_{n}$ and $d(x_{n},z)\leq_{P}a_{n},$ then $\{y_{n}\}$ is convergent to $z.$
\item [(iii)] If $d(x_{n},y_{n})\leq_{P} a_{n}$ and $d(x_{n},z)\leq_{P}a_{n},$ then $\{y_{n}\}$ is a Cauchy sequence.
\end{itemize}
 \end{theorem}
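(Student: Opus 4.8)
The plan is to reduce each of the three parts to the triangle inequality for the cone metric together with the elementary fact that a sum of null sequences in $E$ is again null, and then to invoke the convergence criteria of Theorem \ref{theorem:2}, the uniqueness statement of Remark \ref{remark:1}, and Theorem \ref{theorem:3}. No new estimates should be required; the work is in arranging the hypotheses so that the already-proved lemmas apply.

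For (i) I would first combine symmetry (axiom (3)) and the triangle inequality (axiom (4)) to obtain, for every $n$, the chain $d(y,z)\leq_{P}d(y,x_{n})+d(x_{n},z)=d(x_{n},y)+d(x_{n},z)\leq_{P}a_{n}+b_{n}$, where the last inequality holds because $P$ is closed under addition and the order is transitive. Setting $c_{n}:=a_{n}+b_{n}$, continuity of addition on $E$ gives $c_{n}\to 0_{E}$, so that $0_{E}\leq_{P}d(y,z)\leq_{P}c_{n}$ for all $n$ with $c_{n}\to 0_{E}$ (the left inequality being axiom (1)). From here there are two equivalent routes. Directly, Proposition \ref{proposition:1}(8) yields for each $u\gg 0$ an index $m_{0}$ with $c_{n}\ll u$ for $n\geq m_{0}$, hence $d(y,z)\ll u$ by Proposition \ref{proposition:1}(4); since $u\gg 0$ was arbitrary, Proposition \ref{proposition:1}(7) forces $d(y,z)=0_{E}$, i.e. $y=z$ by axiom (2). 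Alternatively, applying Theorem \ref{theorem:2}(ii) to the constant sequence $w_{n}\equiv y$ shows $w_{n}\to z$, while trivially $w_{n}\to y$, so $y=z$ by Remark \ref{remark:1}.

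For (ii) the same reduction gives $d(y_{n},z)\leq_{P}d(y_{n},x_{n})+d(x_{n},z)=d(x_{n},y_{n})+d(x_{n},z)\leq_{P}a_{n}+a_{n}$ for each $n$. With $c_{n}:=a_{n}+a_{n}$ we again have $c_{n}\to 0_{E}$, and the bound $d(y_{n},z)\leq_{P}c_{n}$ is precisely the hypothesis of Theorem \ref{theorem:2}(ii); therefore $\{y_{n}\}$ converges to $z$. Part (iii) is then immediate: $\{y_{n}\}$ is convergent by (ii), hence Cauchy by Theorem \ref{theorem:3}; equivalently one may feed the bound $d(y_{n},z)\leq_{P}c_{n}$ with $c_{n}\to 0_{E}$ straight into Theorem \ref{theorem:2}(iii).

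The only step that is not pure bookkeeping, and the one I would treat with care, is the passage $a_{n},b_{n}\to 0_{E}\Rightarrow a_{n}+b_{n}\to 0_{E}$ (and likewise $a_{n}+a_{n}\to 0_{E}$). This is exactly where the topological-module structure is used: it follows from continuity of the addition map $E\times E\to E$ applied to $(a_{n},b_{n})\to(0_{E},0_{E})$ in the product topology. Everything else is a mechanical combination of the order relations collected in Proposition \ref{proposition:1} with the convergence lemmas already established.
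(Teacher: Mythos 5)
Your proposal is correct, but it handles part (i) by a genuinely different route than the paper. For (ii) and (iii) you and the paper do essentially the same thing: triangle inequality plus symmetry, then the null-sequence machinery, then Theorem \ref{theorem:3} for the Cauchy claim. For (i), however, the paper never invokes the triangle inequality at all: it simply reads the two hypotheses $d(x_{n},y)\leq_{P}a_{n}$ and $d(x_{n},z)\leq_{P}b_{n}$ as two instances of Theorem \ref{theorem:2}(ii), concluding that $\{x_{n}\}$ converges to both $y$ and $z$, and then quotes uniqueness of limits (Remark \ref{remark:1}) to get $y=z$; no sum of null sequences and no order estimates are needed. Your primary route instead bounds $d(y,z)\leq_{P}a_{n}+b_{n}$ and drives it to $0_{E}$ via Proposition \ref{proposition:1}(4), (7), (8) --- in effect you re-prove the uniqueness-of-limits remark inline rather than citing it; your ``alternative route'' (Theorem \ref{theorem:2}(ii) applied to the constant sequence $w_{n}\equiv y$) is closer to the paper but still routes through the triangle inequality first. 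Neither is wrong, and your version is self-contained at the cost of a little redundancy. One place where your packaging is actually \emph{better} than the paper's: in (ii) the paper concludes $d(y_{n},z)\ll u+u=2u$ and declares convergence, which strictly speaking requires an extra step to pass from bounds of the form $2u$ to arbitrary elements of $P^{\circ}$ (one cannot divide by $2$ in a general $R$-module); your reduction to a single null sequence $c_{n}=a_{n}+a_{n}$ fed into Theorem \ref{theorem:2}(ii) avoids that issue, since Proposition \ref{proposition:1}(8) then yields $c_{n}\ll u$ for every $u\gg 0$ directly. Your attention to the continuity-of-addition step ($a_{n},b_{n}\to 0_{E}$ implies $a_{n}+b_{n}\to 0_{E}$) is also a point the paper silently assumes.
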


\begin{proof}
\begin{itemize}
\item [(i)] Since $d(x_{n},y)\leq_{P} a_{n}$ and $d(x_{n},z)\leq_{P}b_{n},n\in \mathbb{N}.$ Then using Theorem \ref{theorem:2} $\{x_{n}\}$ is convergent to $y$ and $\{x_{n}\}$ is convergent to $z.$ By using Remark \ref{remark:1} we see that $y=z.$
\item [(ii)] Since $\{a_{n}\}$ is convergent to $0_{E}.$ Then by using Proposition \ref{proposition:1} (8), for each $u\gg 0,$ we have $d(x_{n},y_{n})\ll u$ and $d(x_{n},z)\ll u.$ Now, $d(y_{n},z)\leq_{P}d(y_{n},x_{n})+d(x_{n},z)\ll u+u=2u.$ Hence $\{y_{n}\}$ is convergent to $z.$
\item [(iii)] Proof follows from Theorem \ref{theorem:3} and $(ii).$   
\end{itemize}     
\end{proof}

\section{Fixed point theorems via w-distance}

This section deals with the main results of this paper. First, we introduce $w-$distance in cone metric spaces over topological module. Next we discuss some results regarding the same. Further, some fixed point theorems in cone metric spaces via $w-$distance over topological module have been proved.

\begin{definition}
Let $(X,d)$ be a cone metric space over a topological left $R-$module E. Then a map $q:X\times X\to E$ is said to be a $w-$distance on $X$ if it satisfies the following conditions:
\begin{itemize}
\item [(1)] $q(x,y)\geq_{P} 0_{E} ~\forall~x,y\in X.$
\item [(2)] $q(x,z)\leq_{P}q(x,y)+q(y,z) ~\forall~x,y,z\in X.$
\item [(3)] $q(x,.)\to E$ is lower semi-continuous $~\forall~x\in X.$
\item [(4)] For each $u\gg 0,$ there exists $v\gg 0$ so that $q(z,x)\ll v$ and $q(z,y)\ll v$ implies that  $d(x,y)\ll u$ where $u,v\in E.$
\end{itemize}
\end{definition}

\begin{definition}
Let $(X,d)$ be a cone metric space over a topological left $R-$module $E,q$ be a $w-$distance on $X,x\in X$ and a sequence $\{x_{n}\}$ in $X$. Then
\begin{itemize}
\item [(1)] $\{x_{n}\}$ is said to be $q-$Cauchy if for each $u\in E,u\gg 0,$ there exists $\mathbb{N}\in \mathbb{Z}^{+}$ so that $	q(x_{n},x_{m})\ll u,~\forall~m,n\geq N.$
\item [(2)] $\{x_{n}\}$ is said to be $q-$convergent to a point $x\in X$ if for each $u\in E,u\gg 0,$ there exists $N\in \mathbb{Z}^{+}$ so that $q(x,x_{n})\ll u,~\forall~n\geq N.$ By using the concept of lower semicontinuity of $q,$ for each $n\geq N,$ we have $q(x_{n},x)\ll u.$ 
\item [(3)] A cone metric space $(X,d)$ over a topological left $R-$module E is said to be complete with $w-$distance if every $q-$Cauchy sequence is $q-$convergent.  
\end{itemize}
\end{definition}

\begin{theorem}\label{theorem:4}
Let $(X,d)$ be a cone metric space over a topological left $R-$module $E$ and $q$ be a $w-$distance on $X.$ Then every $q-$convergent sequence in $q-$Cauchy.
\end{theorem}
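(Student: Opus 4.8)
The plan is to show that a $q$-convergent sequence satisfies the $q$-Cauchy condition by exploiting property (4) of the $w$-distance, which is precisely the axiom that translates closeness under $q$ into closeness under $d$ and, more importantly here, lets us control pairwise $q$-distances. First I would fix an arbitrary $u \gg 0_E$ and invoke property (4) to obtain a $v \gg 0_E$ with the stated implication. Since $\{x_n\}$ is $q$-convergent to some $x \in X$, the definition gives a natural number $N$ such that $q(x, x_n) \ll v$ for all $n \geq N$; that is, taking $z = x$ in property (4), both $q(x, x_n) \ll v$ and $q(x, x_m) \ll v$ hold for all $m, n \geq N$.

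The key step is then to apply property (4) directly: with $z = x$, the pair of inequalities $q(x,x_n) \ll v$ and $q(x,x_m) \ll v$ yields $d(x_n, x_m) \ll u$ for all $m, n \geq N$. This shows $\{x_n\}$ is Cauchy in the ordinary $d$-sense. However, the statement asks for $q$-Cauchy, meaning control of $q(x_n, x_m)$ rather than $d(x_n, x_m)$, so a little more care is needed. The cleaner route is to bound $q(x_n, x_m)$ using the triangle inequality (2): $q(x_n, x_m) \leq_P q(x_n, x) + q(x, x_m)$. Here the lower-semicontinuity clause in the definition of $q$-convergence is what supplies the needed estimate $q(x_n, x) \ll v'$, complementing $q(x, x_m) \ll v'$; combining the two and using Proposition \ref{proposition:1}(1) (that $P^\circ + P^\circ \subseteq P^\circ$) gives $q(x_n, x_m) \ll v' + v'$.

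I would therefore run the argument as follows. Given $u \gg 0_E$, choose (via halving, as in the proof of Theorem \ref{theorem:3}) a target so that a sum of two small terms lands below $u$. By $q$-convergence to $x$, pick $N$ with $q(x, x_n) \ll u/2$ for all $n \geq N$, and by the lower-semicontinuity remark embedded in the definition, also $q(x_n, x) \ll u/2$ for all $n \geq N$. Then for $m, n \geq N$,
\begin{eqnarray*}
q(x_n, x_m) &\leq_P& q(x_n, x) + q(x, x_m)\\
            &\ll& \frac{u}{2} + \frac{u}{2}\\
            &=& u,
\end{eqnarray*}
where the strict relation follows from Proposition \ref{proposition:1}(1) together with Proposition \ref{proposition:1}(4). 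Hence $\{x_n\}$ is $q$-Cauchy.

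The main obstacle I anticipate is the asymmetry of $q$: unlike $d$, the $w$-distance need not satisfy $q(x,y) = q(y,x)$, so the clause $q(x, x_n) \ll u$ from $q$-convergence does not by itself control $q(x_n, x)$. The definition of $q$-convergence papers over this by asserting (via lower semicontinuity) that $q(x_n, x) \ll u$ also holds for $n \geq N$; the delicate point is to justify that this is exactly what property (3) delivers, and to apply the triangle inequality (2) in the correct order so that both summands $q(x_n, x)$ and $q(x, x_m)$ are the ones we have bounded. Once the orientation of the $q$-distances is handled, the remainder is the routine halving argument mirroring Theorem \ref{theorem:3}.
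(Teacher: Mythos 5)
Your proof is correct and follows essentially the same route as the paper: the triangle inequality $q(x_n,x_m) \leq_P q(x_n,x) + q(x,x_m)$ combined with the halving argument, with both summands controlled via the definition of $q$-convergence (including its lower-semicontinuity clause), and your initial detour through property (4) is rightly discarded. If anything, you are more careful than the paper about the asymmetry of $q$ — the paper bounds $q(x_m,x)$ where the decomposition actually requires $q(x,x_m)$ — but the argument is the same.
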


\begin{proof}
Suppose that $\{x_{n}\}$ is a sequence in $X$ convergent to $x.$ Then for all $u\gg 0$ there is a positive integer $N$ such that $q(x_{n},x)\ll u/2$ and $q(x_{m},x)\ll u/2,~\forall~n,m\geq N.$ Now,
\begin{eqnarray*}
q(x_{n},x_{m})&\leq_{P}&q(x_{n},x)+q(x,x_{m})\\
              &\ll&\frac{u}{2}+\frac{u}{2}\\
							&=&u.
\end{eqnarray*}
Hence the proof.
\end{proof}

\begin{theorem}
Let $(X,d)$ be a cone metric space over a topological left $R-$module $E$ and $q$ be a $w-$distance on $X.$ Then every $q-$convergent sequence has a unique limit.
\end{theorem}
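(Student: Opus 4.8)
The plan is to suppose that a sequence $\{x_{n}\}$ in $X$ is $q$-convergent to two points $y$ and $z$ and to show $y=z$. The idea is to bound the cone distance $d(y,z)$ by an arbitrary interior element $u\gg 0$ and then invoke the order-theoretic fact in Proposition \ref{proposition:1}(7); the mechanism that turns closeness of $q$-values into closeness of $d$ is condition (4) in the definition of a $w$-distance.

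First I would fix an arbitrary $u\gg 0$ and apply condition (4) of the $w$-distance to obtain $v\gg 0$ such that $q(w,a)\ll v$ and $q(w,b)\ll v$ (for a common first argument $w$) force $d(a,b)\ll u$. I would use this with the common first argument taken to be a term $x_{n}$ of the sequence. Because $\{x_{n}\}$ is $q$-convergent to $y$, the definition of $q$-convergence, together with the lower-semicontinuity clause it contains, gives $q(x_{n},y)\ll v$ for all sufficiently large $n$; likewise $q$-convergence to $z$ gives $q(x_{n},z)\ll v$ for all sufficiently large $n$. For $n$ large enough that both hold, condition (4) with $w=x_{n}$, $a=y$, $b=z$ yields $d(y,z)\ll u$.

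Since $u\gg 0$ was arbitrary, this shows $d(y,z)\ll u$ for every $u\in P^{\circ}$, while the first axiom of the cone metric gives $0_{E}\leq_{P}d(y,z)$. Proposition \ref{proposition:1}(7) then forces $d(y,z)=0_{E}$, and the second axiom of the cone metric yields $y=z$. The one point requiring care is the direction of the arguments in condition (4): the definition of $q$-convergence controls $q(y,x_{n})$ with the limit in the first slot, so I must pass through the built-in lower-semicontinuity clause to obtain $q(x_{n},y)\ll v$, which is the form condition (4) actually uses. The remaining steps are a routine chaining of the facts collected in Proposition \ref{proposition:1}.
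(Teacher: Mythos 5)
Your proof is correct, and it takes a genuinely more direct route than the paper's. The paper argues at the level of $q$ itself: writing $x,y$ for the two limits, it bounds $q(x,y)\leq_{P}q(x,x_{n})+q(x_{n},y)\ll u+u=2u$, invokes Proposition \ref{proposition:1} (7) (miscited there as (8)) to get $q(x,y)=0_{E}$, similarly obtains $q(y,x)=0_{E}$, and then asserts ``hence $x=y$''. That final assertion is exactly where condition (4) of the $w$-distance definition has to enter --- for instance with common first argument $x$, after noting $q(x,x)\leq_{P}q(x,y)+q(y,x)=0_{E}$ --- and the paper leaves this step silent; moreover the factor $2u$ is awkward, since in a module over a general topological ring one cannot simply halve an arbitrary element of $P^{\circ}$ to write it as $2u$. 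Your argument sidesteps both issues: you apply condition (4) immediately, with a sequence term $x_{n}$ as the common first argument, so that $q(x_{n},y)\ll v$ and $q(x_{n},z)\ll v$ (each extracted from $q$-convergence through the built-in lower-semicontinuity clause, a point you rightly flag, since the definition directly controls only $q(y,x_{n})$) yield $d(y,z)\ll u$ for the originally prescribed $u$; Proposition \ref{proposition:1} (7) and the cone-metric axioms then finish. What the paper's route buys is the slightly stronger intermediate conclusions $q(x,y)=q(y,x)=0_{E}$, which could be of independent interest; what yours buys is a deduction that is both cleaner and more complete, since it supplies the very link between vanishing $q$-values and equality of points that the paper's proof omits.
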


\begin{proof}
Let $u\gg 0.$ Now, $q(x,y)\leq_{P}q(x,x_{n})+q(x_{n},y)\ll u+u=2u.$
Then using \ref{proposition:1} (8), we see that $q(x,y)=0.$ 
Proceeding in the similar manner we see that $q(y,x)=0.$ Hence $x=y.$ This proves the theorem.
\end{proof}

\begin{theorem}
Let $(X,d)$ be a cone metric space over a topological left $R$-module $E,q$ be a $w-$distance on $X.$ Suppose $f$ is a function from $X$ into $E$ such that $f(x)\geq_{P} 0_{E},~\forall~x\in X.$ Then a function $p$ from $X\times X$ into $E$ defined by\\
$p(x,y)=f(x)+q(x,y)~\forall~(x,y)\in X\times X$ is a $w-$distance. 
\end{theorem}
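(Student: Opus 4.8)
The plan is to verify the four defining conditions of a $w$-distance directly for the map $p(x,y)=f(x)+q(x,y)$, in each case reducing to the corresponding property of $q$ together with the hypothesis $f(x)\geq_{P}0_{E}$ (i.e. $f(x)\in P$). Condition (1) is immediate: since $f(x)\in P$ and $q(x,y)\in P$, and $P$ is closed under addition as a cone, we get $p(x,y)=f(x)+q(x,y)\in P+P\subseteq P$, so $p(x,y)\geq_{P}0_{E}$.

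For condition (2), I would start from the triangle inequality for $q$, namely $q(x,z)\leq_{P}q(x,y)+q(y,z)$. Because $f(y)\geq_{P}0_{E}$ and translation in the partially ordered group $E$ preserves order, adjoining $f(y)$ on the right and invoking transitivity of $\leq_{P}$ (the Proposition in Section~3, part~(i)) yields $q(x,z)\leq_{P}q(x,y)+f(y)+q(y,z)$. Adding $f(x)$ to both sides and using commutativity of addition in $E$ to regroup the right-hand side as $\bigl(f(x)+q(x,y)\bigr)+\bigl(f(y)+q(y,z)\bigr)$ gives exactly $p(x,z)\leq_{P}p(x,y)+p(y,z)$.

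Condition (4) proceeds by borrowing the constant $v$ from $q$. Given $u\gg 0$, choose $v\gg 0$ as supplied by property~(4) of $q$. Since $f(z)\in P$, we have $q(z,x)\leq_{P}f(z)+q(z,x)=p(z,x)$, and likewise for the pair $(z,y)$. Hence if $p(z,x)\ll v$ and $p(z,y)\ll v$, then combining $q(z,x)\leq_{P}p(z,x)\ll v$ with Proposition~\ref{proposition:1}(4) gives $q(z,x)\ll v$, and similarly $q(z,y)\ll v$; property~(4) of $q$ then forces $d(x,y)\ll u$.

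The step that requires the most care is condition (3). Here $p(x,\cdot)=f(x)+q(x,\cdot)$ is the translate of the lower semi-continuous map $q(x,\cdot)$ by the fixed element $f(x)\in E$, and the claim is that translation preserves lower semi-continuity. This reduces to the continuity of the translation map $e\mapsto f(x)+e$ in the topological module $E$ (a consequence of the continuity of addition), but the precise verification depends on the exact definition of lower semi-continuity adopted for cone-valued maps, so I expect this to be the main obstacle and the one place where the argument must be spelled out rather than quoted.
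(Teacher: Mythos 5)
Your proposal is correct and follows essentially the same route as the paper: each condition of a $w$-distance is reduced to the corresponding property of $q$ together with $f(x)\geq_{P}0_{E}$, and your handling of the triangle inequality (2) and of condition (4) (borrowing $v$ from $q$ and using $q(z,x)\leq_{P}p(z,x)\ll v$ with Proposition \ref{proposition:1}(4)) matches the paper's argument step for step. On condition (3) you are in fact more careful than the paper, which simply declares lower semi-continuity to be ``clear''; your reduction to the continuity of the translation $e\mapsto f(x)+e$ in $(E,\mathcal{E})$ is the right way to justify that step.
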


\begin{proof}
For each $x,y,z\in X,$ we have  $p(x,z)=f(x)+p(x,z)\leq_{P} f(x)+f(y)+q(x,y)+q(x,z)=p(x,y)+p(y,z).$ It is clear that $f$ is a lower semi-continuous function. Next suppose $\beta \gg 0$ be fixed. As $q$ is a $w-$distance on $X$ there exists $\alpha\gg 0$ so that $q(z,x)\ll \alpha,q(z,y)\ll \alpha$ implies that $d(x,y)\ll \beta.$ Next suppose that $p(z,x)\ll \alpha,p(z,y)\ll \alpha$ we see that $q(z,x)\leq_{P}f(z)+q(z,x)=p(z,x)\ll \alpha,q(z,y)\leq_{P} f(z)+ q(z,y)=p(z,y)\ll \alpha.$ This implies that $d(x,y)\ll \beta.$  
\end{proof}

\begin{theorem}
Let $(X,d)$ be a cone metric space over a topological left $R$-module $E,q$ be a $w-$distance on $X.$ Consider the sequences $\{x_{n}\}$ and $\{y_{n}\}$ in $X.$ Next let $\{a_{n}\}$ and $\{b_{n}\}$ be two sequences in $P$ convergent to $0_{E}$ and $x,y,z\in X.$ Then the following hold:
\begin{itemize}
\item [(i)] If $q(x_{n},y_{n})\leq_{P} a_{n}$ and $q(x_{n},z)\leq_{P} b_{n}$ for all $n\in \mathbb{N},$ then $\{y_{n}\}$ is convergent to $z.$
\item[(ii)] If $q(x_{n},y_{n})\leq_{P} a_{n}$ and $q(x_{n},z)\leq_{P} b_{n}$ for all $n\in \mathbb{N},$ then $\{y_{n}\}$ is a Cauchy sequence.
\item [(iii)] If $q(x_{n},y)\leq_{P}a_{n}$ and $q(x_{n},z)\leq_{P}b_{n}$ for all $n\in \mathbb{N},$ then $y=z.$ 
\item [(iv)] If $q(x_{n},x_{m})\leq_{P} a_{n}$ for all $m,n\in \mathbb{N},$ then $\{x_{n}\}$ is a Cauchy sequence.
\end{itemize} 
\end{theorem}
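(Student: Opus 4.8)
The common engine for all four parts is condition~(4) in the definition of a $w$-distance, which converts smallness of $q$-values (measured by $\ll$) into smallness of the ambient cone metric $d$, together with Proposition~\ref{proposition:1}(8), which guarantees that a sequence in $E$ tending to $0_{E}$ is eventually $\ll v$ for any prescribed $v\gg 0$. The manoeuvre repeated throughout is: fix $u\gg 0$; extract the associated $v\gg 0$ from condition~(4); apply Proposition~\ref{proposition:1}(8) to obtain a threshold $N$ with $a_{n}\ll v$ and $b_{n}\ll v$ for $n\geq N$; and then promote each hypothesis of the form $q(\cdot,\cdot)\leq_{P}a_{n}$ to $q(\cdot,\cdot)\ll v$ using Proposition~\ref{proposition:1}(4), which lets one pass from $x\leq_{P}y\ll z$ to $x\ll z$.

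For~(i) I would run this template directly. Given $u\gg 0$, take the corresponding $v\gg 0$ and threshold $N$. For $n\geq N$ the bounds $q(x_{n},y_{n})\leq_{P}a_{n}$ and $q(x_{n},z)\leq_{P}b_{n}$ yield $q(x_{n},y_{n})\ll v$ and $q(x_{n},z)\ll v$; since these two $q$-values share the common first argument $x_{n}$, condition~(4) gives $d(y_{n},z)\ll u$. As $u\gg 0$ was arbitrary, $\{y_{n}\}$ converges to $z$. Part~(ii) is then immediate: a convergent sequence in a cone metric space is Cauchy by Theorem~\ref{theorem:3}, so~(ii) follows from~(i).

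Part~(iii) uses the identical computation with the constant point $y$ in place of the sequence $\{y_{n}\}$: for large $n$ one obtains $q(x_{n},y)\ll v$ and $q(x_{n},z)\ll v$, hence $d(y,z)\ll u$ for every $u\gg 0$. Since $d(y,z)\geq_{P}0_{E}$, Proposition~\ref{proposition:1}(7) forces $d(y,z)=0_{E}$, i.e.\ $y=z$. For~(iv) the template must be adjusted, because the hypothesis only controls $q(x_{n},x_{m})$ one-sidedly. Here I would freeze the first index: choose a single $k\geq N$ so that $a_{k}\ll v$; then the hypothesis with $n=k$ reads $q(x_{k},x_{m})\leq_{P}a_{k}\ll v$, so $q(x_{k},x_{m})\ll v$ for all $m>k$. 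With $x_{k}$ now serving as the common first argument, condition~(4) delivers $d(x_{m},x_{l})\ll u$ for all $m,l>k$, which is precisely the Cauchy condition for $\{x_{n}\}$.

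The only genuinely delicate point is the bookkeeping forced by the asymmetry of $q$: condition~(4) can be invoked only when the two $q$-values share their first slot, so in each part one must verify that the available estimates can be arranged with a common left argument. This is automatic in~(i)--(iii), where $x_{n}$ already sits on the left of both bounds, but in~(iv) it requires the extra idea of fixing the index $n=k$ in order to exploit the one-sided estimate $q(x_{n},x_{m})\leq_{P}a_{n}$. I expect no analytic difficulty beyond these order-theoretic verifications.
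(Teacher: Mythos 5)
Your proposal is correct and follows essentially the same route as the paper: condition~(4) of the $w$-distance definition combined with Proposition~\ref{proposition:1}(8) to promote the bounds $q(\cdot,\cdot)\leq_{P}a_{n}\ll v$ into $d$-estimates, with (ii) deduced from (i) via Theorem~\ref{theorem:3}, and (iv) handled by freezing a single index as the common first argument, exactly as the paper does with its fixed index $m_{0}$. Your treatment of (iii) merely fills in the details the paper dismisses with ``clearly follows from (i),'' invoking Proposition~\ref{proposition:1}(7) to conclude $d(y,z)=0_{E}$, which is the right completion of that step.
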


\begin{proof}
\begin{itemize}
\item [(i)] Let $u\gg 0.$ Then there is $v\gg 0$ such that $q(u^{'},v^{'})\ll v$ and $q(u^{'},z)\ll v$ implies that $d(v^{'},z)\ll u.$ Let $m_{0}$ be a natural number such that $a_{n}\ll v$ and $b_{n}\ll v$ for each $n\geq m_{0}.$ Next for each $n\geq m_{0},q(x_{n},y_{n})\leq_{P}a_{n}\ll v$ and $q(x_{n},z)\leq_{P}b_{n} \ll v.$ Therefore, $d(y_{n},z))\ll u.$ This proves that $y_{n}$ is convergent to $z.$
\item [(ii)] Proof follows from $(i)$ and Theorem \ref{theorem:3}. 
\item [(iii)] Clearly follows from $(i).$
\item [(iv)] Let $u\gg 0.$ Proceeding as proof of $(i)$ let $v\gg 0.$ Then $m_{0}\in \mathbb{N}.$ Next, for $n,m\geq m_{0}+1,q(x_{m_{0}},x_{n})\leq u_{m_{0}}\ll v$ and $q(x_{m_{0}},x_{m})\leq u_{m_{0}}\ll v.$ Therefore, $d(x_{n},x_{m})\ll v.$ This proves that $\{x_{n}\}$ is a Cauchy sequence. 
\end{itemize}
\end{proof}

\begin{theorem}
Let $(X,d)$ be a cone metric space over a topological left $R-$module $E$ with $w-$distance $q$ such that the hypotheses $HypI,HypII$ and $HypIII$ are satisfied. Define $$\mathcal{S}=\{r\in R^{+}|(r^{n}) ~\mbox{is a summable family}\}$$ and $f:X\to X$ be a mapping that satisfies
\begin{eqnarray*}
q(fx,fy)\leq_{P}rq(x,y)~\forall~x,y\in X.
\end{eqnarray*}
Then $f$ has a unique fixed point in $X.$ 
\end{theorem}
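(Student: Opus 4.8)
The plan is to run the classical Picard iteration and then to transfer the summability of $(r^{n})$ in the ring $R$ into a $q$-Cauchy estimate inside the module $E$. Fix any $x_{0}\in X$, set $x_{n+1}=fx_{n}$, and write $c=q(x_{0},x_{1})\geq_{P}0_{E}$. First I would prove by induction the geometric bound $q(x_{n},x_{n+1})\leq_{P}r^{n}c$: the base case is the contraction hypothesis, and the inductive step combines $q(x_{n+1},x_{n+2})=q(fx_{n},fx_{n+1})\leq_{P}r\,q(x_{n},x_{n+1})$ with the monotonicity in Proposition~\ref{proposition:1}(3) (applied to $\beta=r\in R^{+}$) and associativity of the module action. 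Summing the $w$-distance triangle inequality (condition (2)) and using transitivity of $\leq_{P}$ together with distributivity of the action then yields, for $m>n$, the telescoped estimate $q(x_{n},x_{m})\leq_{P}\left(\sum_{k=n}^{m-1}r^{k}\right)c$.

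Next I would show $\{x_{n}\}$ is $q$-Cauchy. Since $r\in\mathcal{S}$, the family $(r^{k})$ is summable, so its partial sums form a Cauchy sequence and the family satisfies the Cauchy condition: for every neighbourhood $U$ of $0_{R}$ there is a finite $I_{U}$ with $\sum_{k\in J}r^{k}\in U$ for every finite $J$ disjoint from $I_{U}$. Choosing $J=\{n,\dots,m-1\}$ with $n$ larger than every element of $I_{U}$ shows the partial sums $s_{n,m}=\sum_{k=n}^{m-1}r^{k}$ lie in $U$ once $n$ is large. Now fix $u\gg 0$; because $P^{\circ}$ is open and translation and negation are homeomorphisms of the topological group $(E,+,\mathcal E)$, the set $\{y\in E : y\ll u\}=u-P^{\circ}$ is an open neighbourhood of $0_{E}$, so by continuity of the scalar multiplication $r\mapsto r\,c$ (Hyp II) the preimage $U=\{r\in R : r\,c\ll u\}$ is a neighbourhood of $0_{R}$. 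For $n$ large we then have $s_{n,m}\in U$, whence $q(x_{n},x_{m})\leq_{P}s_{n,m}\,c\ll u$, and Proposition~\ref{proposition:1}(4) gives $q(x_{n},x_{m})\ll u$. This is the step I expect to be the main obstacle, since it is precisely where ring-theoretic summability must be passed, via continuity of the module action and the solidity of $P$, into an order estimate in $E$.

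With $\{x_{n}\}$ shown to be $q$-Cauchy I would invoke completeness of $(X,d)$ with respect to $q$ (which the conclusion requires) to obtain a $q$-limit $x^{*}$. To see $fx^{*}=x^{*}$, observe $q(x_{n},fx^{*})=q(fx_{n-1},fx^{*})\leq_{P}r\,q(x_{n-1},x^{*})=:a_{n}$; since $q(x_{n-1},x^{*})\to 0_{E}$ by $q$-convergence, continuity of scalar multiplication gives $a_{n}\to 0_{E}$ with $a_{n}\in P$, while $q(x_{n},x^{*})\leq_{P}b_{n}:=q(x_{n},x^{*})\to 0_{E}$. Applying the theorem immediately preceding this one (its part (iii)) with $y=fx^{*}$ and $z=x^{*}$ forces $fx^{*}=x^{*}$. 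Equivalently, one may feed $q(x_{n},x^{*})\ll v$ and $q(x_{n},fx^{*})\ll v$ into condition (4) of the $w$-distance to obtain $d(x^{*},fx^{*})\ll u$ for every $u\gg 0$ and conclude $d(x^{*},fx^{*})=0_{E}$ by Proposition~\ref{proposition:1}(7).

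Finally, for uniqueness I would first show $q(x^{*},x^{*})=0_{E}$: from $q(x^{*},x^{*})=q(fx^{*},fx^{*})\leq_{P}r\,q(x^{*},x^{*})$, iteration gives $q(x^{*},x^{*})\leq_{P}r^{n}q(x^{*},x^{*})$ for all $n$, and since $(r^{n})$ is summable Theorem~\ref{theorem:1} forces $r^{n}\to 0_{R}$, hence $r^{n}q(x^{*},x^{*})\to 0_{E}$; Proposition~\ref{proposition:1}(8) then gives $q(x^{*},x^{*})\ll u$ for every $u\gg 0$, so Proposition~\ref{proposition:1}(7) yields $q(x^{*},x^{*})=0_{E}$. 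If $x^{*}$ and $y^{*}$ are both fixed points, the same iteration gives $q(x^{*},y^{*})\leq_{P}r^{n}q(x^{*},y^{*})\to 0_{E}$ and hence $q(x^{*},y^{*})=0_{E}$. Feeding $q(x^{*},x^{*})=0_{E}\ll v$ and $q(x^{*},y^{*})=0_{E}\ll v$ into condition (4) produces $d(x^{*},y^{*})\ll u$ for all $u\gg 0$, so $d(x^{*},y^{*})=0_{E}$ and $x^{*}=y^{*}$, completing the proof.
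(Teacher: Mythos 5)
Your overall strategy is the same as the paper's: Picard iteration, the geometric bound $q(x_{n},x_{n+1})\leq_{P}r^{n}q(x_{0},x_{1})$, a telescoped estimate controlled by summability of $(r^{n})$, completeness to produce $x^{*}$, and then condition (4) of the $w$-distance together with Proposition \ref{proposition:1}(7) to force $fx^{*}=x^{*}$ and uniqueness. Your Cauchy step is in fact tighter than the paper's (you use the Cauchy condition on tails of the summable family plus continuity of $r\mapsto r\,c$, where the paper bounds by the full sum $\bigl(\sum_{i}r^{i}\bigr)r^{n}q(x_{0},x_{1})$ and invokes Proposition \ref{proposition:1}(8)), and your uniqueness step, which passes through $q(x^{*},x^{*})=0_{E}$ and then condition (4), is more complete than the paper's, which never explains how $q(x^{*},y^{*})=q(y^{*},x^{*})=0_{E}$ yields $x^{*}=y^{*}$.

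However, your first route to $fx^{*}=x^{*}$ has a genuine gap: you claim $q(x_{n-1},x^{*})\to 0_{E}$ ``by $q$-convergence'' and then apply the preceding theorem, whose hypotheses require sequences $a_{n},b_{n}$ in $P$ converging \emph{topologically} to $0_{E}$. But $q$-convergence only gives the order estimate that for every $u\gg 0$ eventually $q(x_{n},x^{*})\ll u$; with no normality assumption on the cone, order-smallness does not imply topological convergence to $0_{E}$ (this is the classical failure for non-normal solid cones, e.g.\ the nonnegative cone in $C^{1}[0,1]$), so neither the application of that theorem nor the claim ``$a_{n}\to 0_{E}$ by continuity of scalar multiplication'' is justified. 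Your fallback route (feeding $q(x_{n},x^{*})\ll v$ and $q(x_{n},fx^{*})\ll v$ into condition (4)) is the paper's actual argument, but as written it asserts $q(x_{n},fx^{*})\ll v$ without proof: from $q(x_{n},fx^{*})\leq_{P}r\,q(x_{n-1},x^{*})$ you must produce, for a given $v\gg 0$, some $w\gg 0$ with $r\,w\ll v$, and then combine $q(x_{n-1},x^{*})\ll w$ with Proposition \ref{proposition:1}(3) and (4). Such $w$ exists: by continuity of the module action the set of $\alpha\in R$ with $v-r\,(\alpha\, v)\in P^{\circ}$ is a neighbourhood of $0_{R}$, by Hyp I(2) it contains some $\alpha\in\mathcal{U}_{+}(R)$, and then $w=\alpha\, v\in P^{\circ}$ works by Proposition \ref{proposition:1}(2). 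This is the content of the paper's (terse) appeal to Proposition \ref{proposition:1}(1) and (2); without some version of it, the passage from the contraction inequality to ``$q(x_{n},fx^{*})\ll v$'' is unsupported in both of your routes.
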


\begin{proof}
Let $x_{0}$ be an arbitrary point in $X.$ For each $n\geq 1,$ let $x_{1}=f(x_{0})$ and $x_{n+1}=f(x_{n})=f^{n+1}(x_{0}).$ Then,\\
$q(x_{n},x_{n+1})=q(fx_{n-1},fx_{n})\leq_{P}rq(x_{n-1},x_{n})\leq_{P}r^{2}q(x_{n-2},x_{n-1})\leq_{P}\ldots\leq_{P}r^{n}q(x_{0},x_{1}).$\\
On the basis of above inequality, for each $p\geq 1,$ we have \\
$q(x_{n},x_{n+p})\leq_{P} q(x_{n},x_{n+1})+q(x_{n+1},x_{n+2})+\ldots+q(x_{n+p-1},x_{n+p})\\
\leq_{P}r^{n}q(x_{0},x_{1})+r^{n+1}q(x_{0},x_{1})+\ldots+r^{n+p-1}q(x_{0},x_{1})\\
\leq_{P}(1_{R}+r+\ldots+r^{p-1})r^{n}q(x_{0},x_{1})\\
\leq_{P}(\sum_{i=0}^{+\infty} r^{i})r^{n}q(x_{0},x_{1}).$\\
Using Theorem \ref{theorem:1}, we have $r^{n}\overset {\text{ R}}{\to}0_{R}$ as $n\to \infty.$ Also $\{r^{n}\}$ is a summable family, right multiplication is continuous and using Proposition \ref{proposition:1} (8) we see that for each $u\gg 0,$ there is a natural number $N$ such that $q(x_{n},x_{n+p})\ll u ~\forall~ n\geq N$ and $p\geq 1.$ This proves that $\{x_{n}\}$ is $q-$Cauchy in $X.$ By completeness of $X$ there is $x^{*}$ in $X$ such that $\{x_{n}\}$ is $q-$convergent to $x^{*}$ as $n$ tends to $\infty.$ Take a natural number $N_{2}$ such that $q(x_{n},x^{*})\ll u/2$ and $q(x^{*},x_{n})\ll {u}/{2}$ for each $n\geq N_{2},u\gg0.$ Therefore,
\begin{eqnarray*}
q(fx^{*},x^{*})&\leq_{P}&q(fx^{*},fx_{n})+q(fx_{n},x^{*})\\
              &\leq_{P}&rq(x^{*},x_{n})+q(x_{n+1},x^{*})\\
							&\ll& ru/2+u/2.
\end{eqnarray*}
Now, using Proposition \ref{proposition:1} (1) and (2) we have 
$$q(fx^{*},x^{*})\ll v,~\forall~v>0.$$
Next using Proposition \ref{proposition:1} (7) we see that 
$$q(x^{*},fx^{*})=0.$$
Proceeding in the similar manner we see that $q(f(x^{*},x^{*})=0.$
Therefore, $fx^{*}=x^{*}.$ So, $x^{*}$ is fixed point of $f.$   
Now, if $y^{*}$ is another fixed point of $f.$ Then for each $u\in P^{\circ},$ we have
$$q(x^{*},y^{*})=q(fx^{*},fy^{*})\leq_{P} rq(x^{*},y^{*})\leq_{P} r^{2}q(x^{*},y^{*})\leq_{P}q(x^{*},y^{*})\leq_{P}\ldots\leq_{P}r^{n}\ll u.$$ Thus, $q(x^{*},y^{*})=0.$ Proceeding in the similar manner we see that $q(y^{*},x^{*})=0.$ So, $x^{*}=y^{*}.$ This proves that $f$ has a unique fixed point. 
\end{proof}

\begin{theorem}
Let $(X,d)$ be a cone metric space over a topological left $R-$module $E$ with $w-$distance $q$ such that the hypotheses $HypI,HypII$ and $HypIII$ are satisfied. Define $$\mathcal{S}=\{k\in R^{+}|(k^{n}) ~\mbox{is a summable family}\}$$ and $f:X\to X$ be a mapping that satisfies:
\begin{itemize}
\item [(1)] $q(fx,f^{2}x)\leq_{P}kq(x,fx)~\forall~x\in X,$
\item [(2)] $inf\{q(x,y)+q(x,fx):x\in X\}>_{P}0,$ for each $y\in X$ where $y\neq fy.$
\end{itemize}
 Then there is $\zeta\in X$ so that $\zeta=f\zeta.$ Also, if $\xi=f(\xi),$ then $q(\xi,\xi)=0.$
\end{theorem}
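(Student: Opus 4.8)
The plan is to run the Picard iteration generated by $f$, show the orbit is a $q$-Cauchy sequence using the summability of $(k^n)$, and then use the lower-bound hypothesis (2) to force the $q$-limit to be a fixed point, following the Kada--Suzuki--Takahashi scheme adapted to the cone/module setting.

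First I would fix an arbitrary $x_0\in X$ and set $x_{n+1}=fx_n$. Applying hypothesis (1) along the orbit gives $q(x_n,x_{n+1})=q(fx_{n-1},f^2x_{n-1})\leq_P k\,q(x_{n-1},x_n)$, and iterating yields $q(x_n,x_{n+1})\leq_P k^n q(x_0,x_1)$ for every $n$. The triangle inequality (axiom (2) of the $w$-distance) then gives, for $p\geq 1$, the telescoping estimate $q(x_n,x_{n+p})\leq_P (1_R+k+\dots+k^{p-1})k^n q(x_0,x_1)\leq_P (\sum_{i=0}^{+\infty}k^i)k^n q(x_0,x_1)$, exactly as in the previous theorem. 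Since $k\in\mathcal{S}$, the family $(k^n)$ is summable, so by Theorem \ref{theorem:1} we have $k^n\to 0_R$; continuity of the ring and module operations then forces $b_n:=(\sum_{i=0}^{+\infty}k^i)k^n q(x_0,x_1)\to 0_E$. Because $q(x_n,x_{n+p})\leq_P b_n$ with $b_n\to 0_E$, Proposition \ref{proposition:1}(8) together with part (4) shows $q(x_n,x_{n+p})\ll u$ for all large $n$, so $\{x_n\}$ is $q$-Cauchy, and completeness with $w$-distance provides a $q$-limit $\zeta\in X$.

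Next I would show $\zeta=f\zeta$, which is the crux. Using the lower semicontinuity of $q(x_n,\cdot)$ (axiom (3)) against $x_m\to\zeta$, together with the uniform bound $q(x_n,x_m)\leq_P b_n$ valid for all $m>n$, one obtains $q(x_n,\zeta)\leq_P b_n$. Hence $q(x_n,\zeta)+q(x_n,fx_n)=q(x_n,\zeta)+q(x_n,x_{n+1})\leq_P b_n+k^n q(x_0,x_1)=:c_n$ with $c_n\to 0_E$. Suppose, for contradiction, that $\zeta\neq f\zeta$. Then hypothesis (2) with $y=\zeta$ furnishes an element $c>_P 0_E$ that is a lower bound for $\{q(x,\zeta)+q(x,fx):x\in X\}$; in particular $c\leq_P c_n$, i.e.\ $c_n-c\in P$, for every $n$. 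Since $c_n-c\to -c$ and $\bar P=P$, we get $-c\in P$; as also $c\in P$, cone axiom (3) gives $c=0_E$, contradicting $c>_P 0_E$. Therefore $\zeta=f\zeta$, and $\zeta$ is the desired fixed point. Finally, for the last assertion let $\xi=f\xi$, so $f^2\xi=\xi$; hypothesis (1) gives $q(\xi,\xi)=q(f\xi,f^2\xi)\leq_P k\,q(\xi,f\xi)=k\,q(\xi,\xi)$, and iterating yields $q(\xi,\xi)\leq_P k^n q(\xi,\xi)$ for all $n$. Since $k^n q(\xi,\xi)\to 0_E$, combining $0_E\leq_P q(\xi,\xi)\leq_P k^n q(\xi,\xi)$ with Proposition \ref{proposition:1}(8),(4) gives $q(\xi,\xi)\ll u$ for every $u\gg 0$, whence $q(\xi,\xi)=0_E$ by Proposition \ref{proposition:1}(7).

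I expect the main obstacle to be the third paragraph: giving the infimum condition (2) a usable meaning in the partially ordered module and extracting the contradiction. The two delicate points are (a) justifying the bound $q(x_n,\zeta)\leq_P b_n$ from lower semicontinuity, where one must interpret the $\liminf$ of an $E$-valued sequence dominated by a null sequence, and (b) converting ``$\inf\{\cdots\}>_P 0$'' into a genuine positive lower bound $c$, so that the closedness of the cone $P$ can be invoked to reach the contradiction. The iteration and $q$-Cauchy estimates, by contrast, are routine repetitions of the preceding contraction theorem.
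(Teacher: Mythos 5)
Your proposal is correct and follows essentially the same route as the paper's own proof: Picard iteration, the summability estimate $q(x_n,x_{n+p})\leq_P\bigl(\sum_{i=0}^{+\infty}k^i\bigr)k^n q(x_0,x_1)$, completeness to obtain the $q$-limit $\zeta$, lower semicontinuity to get $q(x_n,\zeta)\leq_P b_n$, a contradiction with hypothesis (2) when $\zeta\neq f\zeta$, and the same iteration plus Proposition \ref{proposition:1}(7) for $q(\xi,\xi)=0_E$. The only divergence is your handling of the infimum condition---extracting a positive lower bound $c$, using $c\leq_P c_n$ with $c_n\to 0_E$ and the closedness of $P$ to force $-c\in P$ and hence $c=0_E$---which is actually a more careful justification than the paper's own step, where a chain of infima of $E$-valued sets is written down without addressing their existence in the partially ordered module.
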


\begin{proof}
Let $v$ be an element of $X.$ Define $v_{n}=f^{n}v,$ where $n\in \mathbb{N}.$ Then we have the following inequality:\\
$q(v_{n},v_{n+1})\leq_{P}kq(v_{n-1},v_{n})\leq_{P}\ldots\leq_{P} k^{n}q(v_{0},v_{1}).$\\
Based on the previous inequality , for all $p\geq 1,$ we have
\begin{eqnarray*}
q(v_{n},v_{n+p})&\leq_{P}&q(v_{n},v_{n+1})+\ldots+q(v_{n+p-1},v_{n+p})\\
                &\leq_{P}&k^{n}q(v_{0},v_{1})+\ldots+k^{n+p-1}q(v_{0},v_{1})\\
								&\leq_{P}&k^{n} (1_{R}+k+\ldots+k^{p-1})q(v_{0},v_{1})\\
								&\leq_{P}&k^{n} (\sum_{i=0}^{+\infty}k^{i})q(v_{0},v_{1}).
\end{eqnarray*}
Using Theorem \ref{theorem:1}, we have $k^{n}\overset {\text{ R}}{\to}0_{R}$ as $n\to \infty.$ Also $\{k^{n}\}$ is a summable family, right multiplication is continuous and using Proposition \ref{proposition:1} (8) we see that for each $u\gg 0,$ there is a natural number $m$ such that $q(v_{n},v_{m})\ll u ~\forall~ n> m.$This proves that $\{v_{n}\}$ is $q-$Cauchy in $X.$ By completeness of $X$ there is $\zeta$ in $X$ such that $\{v_{n}\}$ is $q-$convergent to $\zeta$ as $n$ tends to $\infty.$ Let $n$ be a fixed natural number. As $\{v_{m}\}$ is convergent to $\zeta$ and $q(v_{n},.)$ is lower semi continuous, we see that \\
$q(v_{n},\zeta)\leq_{P} \lim \limits_{m\to \infty} \inf q(v_{n},v_{m})\leq_{P}k^{n}\sum_{i=0}^{+\infty}k^{i}q(v_{0},v_{1}).$\\ Next suppose that $\zeta\neq f\zeta.$ Then we have \\
$0_{E}\leq_{P}\inf\{q(v_{n},\zeta)+q(v_{n},v_{n+1}):n\in \mathbb{N}\}\leq_{P}\inf\{k^{n}\sum_{i=0}^{+\infty} k^{i}q(v_{0},v_{1})+k^{n}q(v_{0},v_{1}):n\in \mathbb{N}\} =0_{E},$\\
a contradiction. Hence $\zeta=f(\zeta).$ If $\zeta=f(\zeta),$ then we have,\\
$q(\zeta,\zeta)=q(f(\zeta),f^{2}(\zeta))\leq k q(\zeta,f(\zeta))=k q(\zeta,\zeta).$\\
Therefore, $q(\zeta,\zeta)\leq k^{n}q(\zeta,\zeta)\ll u,$ for each $u\gg 0.$ Then using Proposition \ref{proposition:1} (7), we see that 
$$q(\zeta,\zeta)=0.$$ Hence the proof.
\end{proof}

\begin{remark}
The statement $``\zeta=f(\zeta)~\mbox{then}~q(\zeta,\zeta)=0"$ gives the application of the previous theorem. In order to get the fixed points of $f,$ we need to find points $\zeta$ where $q(\zeta,\zeta)=0;$ as if $q(\zeta,\zeta)\neq 0,$ then $\zeta\neq f(\zeta).$  
\end{remark}
 
\bibliographystyle{amsplain}

\end{document}